\documentclass{aptpub}
\usepackage{tikz}

\newcommand{\pr}[1]{\mathbb{P}\!\left(#1\right)}

\authornames{Evita Nestoridi} 
\shorttitle{A non-local random walk on the hypercube} 



\begin{document}
\title{A non-local random walk on the hypercube} 

\authorone[Princeton University]{Evita Nestoridi} 

\addressone{Department of Mathematics\\
Fine Hall, Washington Road,\\
Princeton, NJ 08544-1000, USA\\
\email{exn@princeton.edu}} 

\begin{abstract}
This paper studies the random walk on the hypercube $(\mathbb{Z}/2\mathbb{Z})^n$ which at each step flips $k$ randomly chosen coordinates. We prove that the mixing time for this walk is of order $\frac{n}{k} \log n$. We also prove that if $k=o(n)$, then the walk exhibits cutoff at $\frac{n}{2k} \log n$ with window $\frac{n}{2k} $.
\end{abstract}

\keywords{Hypercube, coupling, random walks, Ehrenfest urn model} 

\ams{60J10}{60C05} 

\section{Introduction}
Consider two urns, one containing zero balls and the second containing $n$ balls. At each step,  pick $k$ total balls at random and move  each of them to the opposite urn. This is a generalization of the Ehrenfest's urn model, which works for $k=1$.

The above Markov chain can be also viewed as a random walk on $(\mathbb Z/2\mathbb Z)^n$ where at each step we flip $k$ random coordinates (for some fixed $k$).  For the walk to be transitive, $k$ needs to be  odd, and to avoid parity problems, it is simplest to consider the lazy version of this walk. In other words, at each step, do nothing with probability $1/2$ and with probability $1/2$  choose a random set of $k$ coordinates and flip them. The main question is to find the mixing time of this walk for the total variation distance. 

 This non-local walk  implies a big change at each step.
Almost all of the tools and examples developed over the past years give careful rates for local Markov chains, such as nearest neighbor random walks on graphs and the usual implementation of the Gibbs sampler and Metropolis algorithm. On the other hand, there are a host of much more global algorithms such as Swedsen-Wang, Wolff, hit and run and, in card shuffling, the riffle shuffles and hyperplane arrangement walks, where the chain moves quite far in one step. See Andersen and Diaconis \cite{AndersenDiaconis} for a survey of many such algorithms. Cheeger and path arguments are not set up to deal with these non-local Markov chains. The present paper gives a careful study of this kind of long range walk via techniques capable of giving sharp results.

There is a second reason why this particular random walk is interesting. Fix $x \in (\mathbb Z/2\mathbb Z)^n$. Let $P$ denote the transition matrix of the random walk and let $P^{*\ell}$ denote the $\ell$th power of $P$. There are two different approaches to finding the mixing time of this walk. The first approach is developed in Section \ref{eig}. It involves finding the eigenvalues of the walk using representation theory and using the Fourier transform to give bounds on the $\ell^2$ norm of the difference $P^{*\ell}_{x}-U$, where $U$ is the uniform measure and $P^{*\ell}_{x}$ is the row of $P^{*\ell}$ corresponding to $x$. For the case of $k=1$, this technique works nicely and gives a sharp upper bound on the mixing time. However, for $k=\frac{n}{2}$, it turns out that the bound obtained via the $L^2$ norm does not give a sharp upper bound on the mixing time, which is defined in terms of the total variation distance ($L^1$ norm).

A second argument via coupling is introduced in Section \ref{coupling}. It provides a solution to the general case and makes the difference between the $L^2$ norm and total variation distance clear. This coupling argument is a generalization of one used by D.\ Aldous \cite{Aldous_correct} for the case $k=1$. See \cite{Aldous1} for more results of Aldous on the hypercube. The lower bound uses the first two eigenvectors and eigenvalues of the random walk and the second moment technique. This method was firstly introduced by P.\ Diaconis and M.\ Shahshahani in \cite{PDMS}. In their paper, they managed to prove a lower bound for the case $k=1$ that matched the Aldous' upper bound, proving in this way the existence of a cutoff at $\frac{1}{4}(n+1) \log n $. Another way to find a lower bound was proved by L.\ Saloff-Coste in \cite{LSC} using Wilson's lemma, which is Lemma $2$ of \cite{Wilson}. In \cite{PDRGM}, Diaconis, Graham and Morisson use Fourier Analysis directly to derive the exact behavior of the error for the nearest neighbor random walk.

It is easy to see that the mixing time for the $k$ model and the $n-k$ model will be the same, therefore we will focus on the case $k \leq n/2$. 
The results of this paper are the following:

\begin{theorem}\label{coupling_upper_bound}
For the lazy walk changing $k\leq n /2$ coordinates on the hypercube, the following hold for every $x\in (\mathbb Z/2\mathbb Z)^n$.
\begin{enumerate}
\item[(a)] For $\ell=  \frac{n^2}{2k(n-k) } \log n + c \frac{n^2}{k(n-k)}  $, we have that
$$\lVert P^{*\ell}_{x}-U \rVert_{T.V.} \leq e^{-c}  + 2^{-c},$$
where $c>0$.
\item[(b)] For $\ell= \frac{n}{2k}\log n -c\frac{n}{k}$, where $0<c \leq \frac{1}{4} \log n$ and for $x$ being the identity element, we have that
$$\lVert P^{*\ell}_{id} - U\rVert _{T.V.} \geq 1- \frac{B}{e^{4c}},$$
for a uniformly bounded constant $B>0$.
\end{enumerate}
\end{theorem}
The following corollary discusses cutoff.
\begin{corollary}
If $k=o(n)$, then walk exhibits cutoff at $\frac{n}{2k} \log n$ with window $\frac{n}{2k}$. 
\end{corollary}
%

 Section \ref{more} contains the analysis for  $L^2$-mixing time of the random walk on $(\mathbb{Z}/ m \mathbb{Z})^n$ generated by the measure 
\begin{equation}\label{q}
Q(a_{i_1} e_{i_1} +a_{i_2} e_{i_2}+ \dots +a_{i_k} e_{i_k}  )= \frac{1}{{n \choose k}m^k},
\end{equation}
where $\{e_j\}_{j=1}^n$ is the standard base, $a_{i_j} \in \mathbb{Z}/ m \mathbb{Z}$ and $\{ i_1, i_2, \ldots i_k\} \subset \{1,2,\dots, n\}$. Let $Q(x, xg)= Q(g)$ for every $x,g \in (\mathbb{Z}/ m \mathbb{Z})^n$.
 The main result of Section \ref{more} is:
\begin{theorem}\label{gen}
For the walk generated by $Q$, if $\ell=\frac{n+1}{2k} \log (mn) + \frac{c(n+1)}{2k}$, then for every $x \in (\mathbb{Z}/ m \mathbb{Z})^n$, we have that
$$4\lVert Q_x^{*\ell}-U\rVert ^2_{T.V.} \leq e^{-c} .$$
\end{theorem}

It is known that the total variation mixing time is faster than that (using the fact that  the first time that we have touched all coordinates  is a strong stationary time) but the above result holds for the $L^2$ norm, which allows us to use comparison theory to provide bounds for the $L^2$-mixing times of the walk on $(\mathbb{Z}/ m \mathbb{Z})^n$ generated by
\begin{equation*}
\text{\~{P}}(\pm e_1)= \frac{1}{4n}, \text{\~{P}}(id)= \frac{1}{2}.
\end{equation*}
More precisely, in Section  \ref{comp} we prove the bound 
$
m^2(\frac{n+1}{2} \log (mn) + \frac{c(n+1)}{2})$ for the mixing time of the last random walk. The analysis of the $L^2$ norm of the last walk has already been done by Diaconis and Saloff-Coste \cite{Compare}, where they proved an upper bound of order $m^2n \log n$ and, then, Saloff-Coste proved the cutoff \cite{Lbook}.

\section{The history of the Ehrenfest's urn model}
The Ehrenfest's urn model was introduced by Tatjana and Paul Ehrenfest \cite{Ehr} to study the second law of thermodynamics. This is a model for $n$ particles distributed in two containers and each particle changes container independently from the others (see Figure \ref{fi}). This process is repeated several times and the question is to find the limiting distribution of the process. M.\ Kac \cite{Kac} approached this problem by finding the eigenvectors and eigenvalues of the transition matrix. He also proved that if the initial system state is not at equilibrium then the entropy is increasing.

\begin{figure}[ht!]
\centering
\includegraphics[scale=0.4]{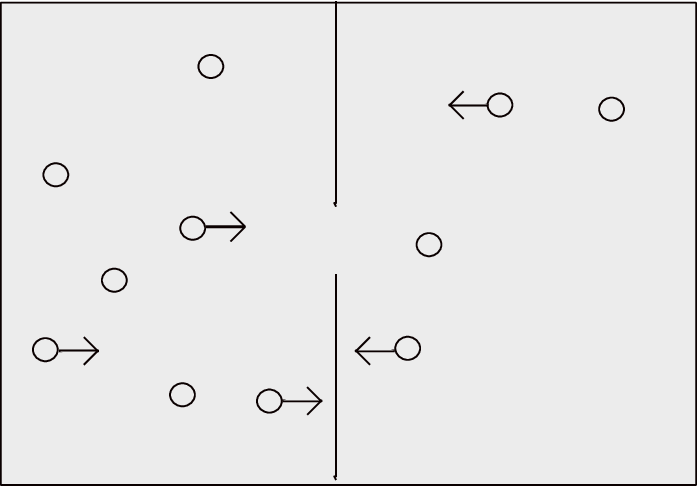}
\caption{Eleven particles in two containers, five of each changing containers}
\label{fi}
\end{figure}

This problem can also be viewed as a random walk on $(\mathbb{Z}/2\mathbb{Z})^n$ where the number of ones in the binary vector represent the number of particles in the right hand container. Flipping one (or $k$) coordinates of the binary vector corresponds to moving one (or $k$) particles to the other container. But now the Markov chain problem can be studied through a random walk on an abelian group, where representation theory is quite simple to use. As Persi Diaconis writes in Chapter $3$ of his book \cite{Pbook}, Kac  posed the question: When can a Markov chain be lifted to a random walk on a group?

\section{Coupling Argument}\label{coupling}

Consider the following measure on $(\mathbb{Z}/2\mathbb{Z})^n$ 

$$P(g)= \begin{cases} \frac{1}{2}, & \mbox{if } g=\mathrm{id} \\ \frac{1}{2 {n \choose k}}, & \mbox{if } g \in(\mathbb{Z}/2\mathbb{Z})^n \mbox{ has } k \mbox{ ones and } n-k \mbox{ zeros} \end{cases}$$
and notice that $P(x, xg)= P(g)$ for every $x,g \in (\mathbb{Z}/2\mathbb{Z})^n$.

Here is the coupling argument which will provide an upper bound for the mixing time for $k \leq \frac{n}{2}$:
Start with two different copies of the Markov chain. At time $t$ denote the state of each as $X^t_1$ and $X^t_2$.
$X_1$ will start at a deterministic $x \in (\mathbb{Z}/2\mathbb{Z})^n$ and $X_2$ will start at a random configuration.
At time $t$, let 
\begin{equation}\label{y}
y(t)= \lVert X^t_1- X^t_2\rVert _1=\sum_i|X^t_{1}(i)- X^t_{2}(i)|,
\end{equation} 
where $X^t(i)$ denotes the $i$th coordinate of the corresponding vertor.
Then consider the following cases:

\begin{enumerate}
\item If $y(t)$ is odd then take one independent step on each chain according to the probability measure $P$.
\item If $y(t)$ is even then  with probability $\frac{1}{2}$ stay fixed in both chains. With probability $\frac{1}{2 {n \choose k}}$ choose $k$ coordinates and denote them by $i_1<\ldots < i_k$. Flip $X^t_1(i_1),\ldots, X^t_1(i_k)$. To determine the move on $X^t_2$, we need the following definition.
\begin{definition}\label{a}
Denote by $a(t)$ the number of the mismatching coordinates among the $k$ ones selected.
\end{definition}
 If $a(t)> \frac{y(t)}{2}$ then flip $X^t_2(i_1),\ldots, X^t_2(i_k)$. If $a(t) \leq  \frac{y(t)}{2}$, for $i_1$ find the first $i_1' \notin \lbrace i_1, \ldots, i_k \rbrace$ such that $X^t_2(i_1')\neq X^t_1(i_1') $. Flip $X^t_2(i_1')$. Then, find the second $i_2'\notin \lbrace i_1, \ldots, i_k \rbrace$ (cyclically) such that $X^t_2(i_2')\neq X^t_1(i_2') $, flip $X^t_2(i_1')$, etc. This way we determine which coordinates of $X^t_2$ will be flipped.

\begin{figure}[h!]
\centering
\includegraphics[scale=0.33]{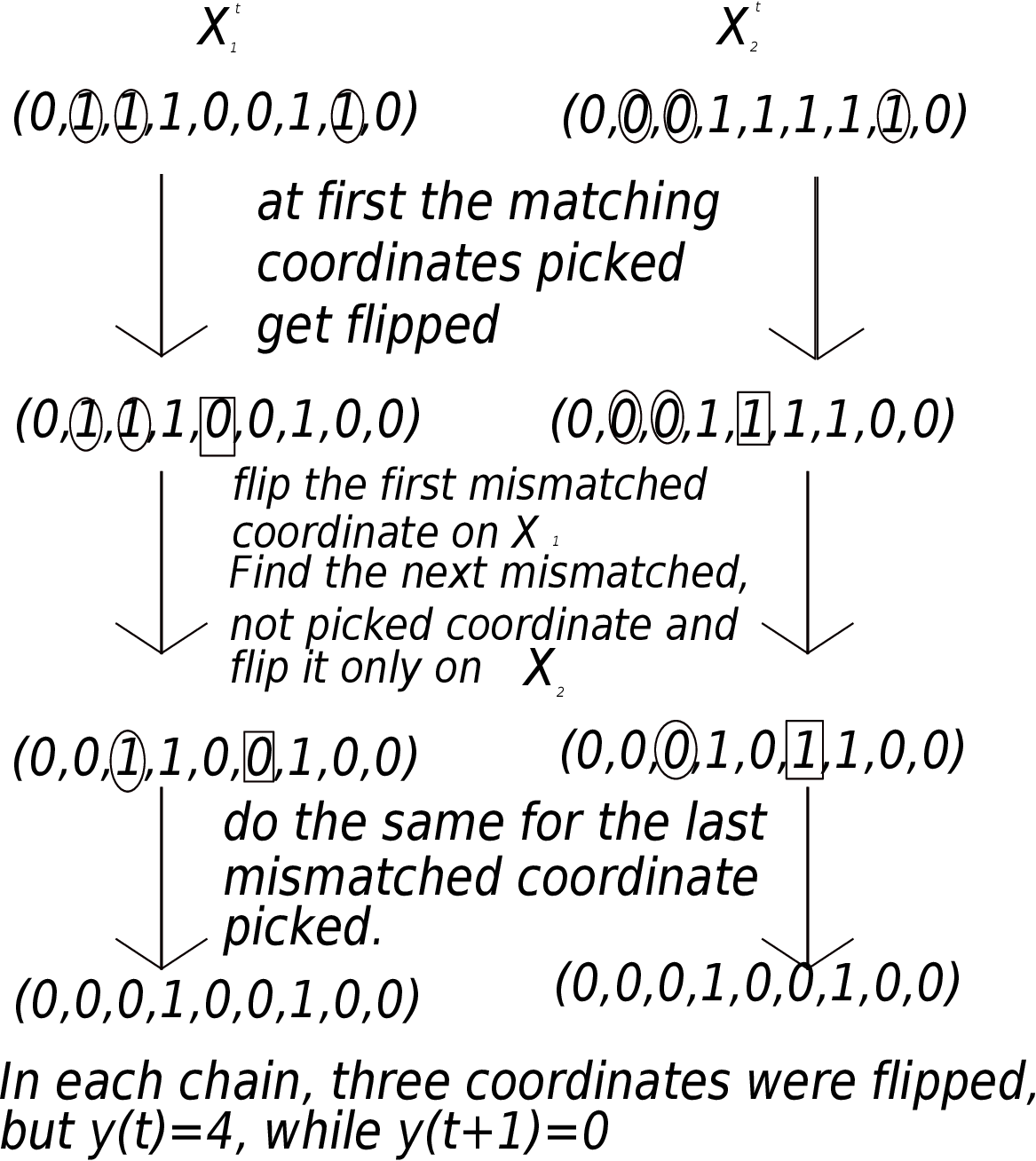}
\caption{This picture gives an example of how the coupling works}
\end{figure}

\end{enumerate}
\begin{definition}
If $x\in (\mathbb{Z}/ 2 \mathbb{Z})^n$ is the starting configuration of $X_1$, then let $T_x$ denote the first time the two chains match.
\end{definition}
The following lemma, which can be found as Lemma $5$ of Chapter $4$ of \cite{Pbook}, says how  the above coupling can be used to get an upper bound for the total variation distance.

\begin{lemma}\label{theory}
For every starting configuration $x \in (\mathbb{Z}/ 2 \mathbb{Z})^n$, we have that
$$\lVert P_x^{*\ell}-U\rVert _{T.V.} \leq P(T_x >\ell).$$
\end{lemma}

\begin{remark}
The proof of Theorem \ref{coupling_upper_bound} does not depend on the starting configuration, since we will be bounding the quantity $\max_x P(T_x >\ell)$.
\end{remark}

\section{Proof of Theorem \ref{coupling_upper_bound}}
\subsection{Upper Bound}
\begin{proof}[Proof of \ref{coupling_upper_bound}(a).]
At first, in case that the starting number of the mismatched coordinates of the chains is odd, wait until the coupling suggests staying fixed at one of them and take a step on the other to turn the difference even. Call $t_0$ the time the above happens. Then $t_0$ follows  a geometric distribution with probability of success $\frac{1}{2}$. Therefore,
$$\pr{t_0 >c \frac{n^2}{2k(n-k)} }  \leq \frac{1}{2^{c \frac{n^2}{2k(n-k)} } }\leq 2^{-c}.$$

The coupling ensures that the distance remains even. To analyze the coupling, we imitate path coupling techniques as they were introduced by Bubley and Dyer \cite{pathcoupl}. As explained in Section $2$ of \cite{pathcoupl}, we need to consider a new graph structure on $(\mathbb{Z}/2 \mathbb{Z})^n$ and examine how the distance behaves after one step if $X_1^{\ell}$ and $X_2^{\ell}$ are in adjacent positions.

Let $p(x,y)= \frac{\lVert  x-y \rVert}{2}$ and $x$ be adjacent to $y$ if and only if $p(x,y)=1$.
If $X_1^{\ell-1}$ and $X_2^{\ell-1}$ differ only at two coordinates, that is $p( X_1^{\ell}, X_2^{\ell})=1$, then  the probability of having that $X_1^{\ell}=X_2^{\ell}$ is $\frac{2k(n-k)}{n(n-1)}$, which gives that
\begin{equation*}
\mathbb{E}\{ \lVert  X_1^{\ell}- X_2^{\ell} \rVert \vert X_1^{\ell-1},X_2^{\ell-1} \} \leq \left( 1- \frac{2k(n-k)}{n(n-1)}\right) \lVert  X_1^{\ell-1}-X_2^{\ell-1} \rVert.
\end{equation*}
Path coupling says that this extends to
\begin{equation*}
\mathbb{E}\{ \lVert  X_1^{\ell}-X_2^{\ell} \rVert \vert X_1^{t_0},X_2^{t_0} \} \leq n \left( 1- \frac{2k(n-k)}{n(n-1)}\right)^{\ell-t_0}
\end{equation*}
where $p( X_1^{\ell}, X_2^{\ell})$ is not necessarily $1$, as explained thoroughly in Theorem $6.2$ of \cite{Sinclair}.
     
If $\ell=\frac{n^2}{2k(n-k) } \log n + c \frac{n^2}{k(n-k)} $, Markov's inequality gives that
\begin{align*}
&\pr{T>\ell} = \pr{T>\ell \vert t_0 \leq c \frac{n^2}{2k(n-k)} } \pr{t_0 \leq c \frac{n^2}{2k(n-k)} } \\
&+\pr{T>\ell \vert t_0 >c \frac{n^2}{2k(n-k)} } \pr{t_0> c \frac{n^2}{2k(n-k)} } \\
&\leq\pr{ \lVert  X_1^{\ell}-X_2^{\ell} \rVert \geq 1 \vert t_0 \leq c \frac{n^2}{2k(n-k)} } + \pr{t_0>c \frac{n^2}{2k(n-k)} } \\
& \leq \mathbb{E}\{ \lVert  X_1^{\ell}-X_2^{\ell} \rVert \vert X_1^{0},X_2^{0}, t_0 \leq c \frac{n^2}{2k(n-k)} \}+2^{-c}  
\end{align*}
\begin{align*}
 \leq n \left( 1- \frac{2k(n-k)}{n(n-1)}\right)^{\ell-c \frac{n^2}{2k(n-k)} } +2^{-c}\leq e^{-c} + 2^{-c}.
\end{align*}
\end{proof}
\subsection{Lower Bound}
\label{sec:lower}
The lower bound will be proved using the eigenvectors and eigenvalues for this Markov chain. In this section, the  random walk will begin at the identity. Theorem $6$ of \cite{Pbook} (page $49$) says that the eigenvalues are the Krawtchuck polynomials and the eigenvectors are the normalized Krawtchuck polynomials. To see this, notice that the irreducible representations of $(\mathbb{Z}/2\mathbb{Z})^n$ are indexed by vectors $\textbf{a} \in (\mathbb{Z}/2\mathbb{Z})^n$ so that
$$\rho_{\textbf{a}}(\textbf{v})= (-1)^{\textbf{a} \cdot \textbf{v}},$$
where $\textbf{v} \in (\mathbb{Z}/2\mathbb{Z})^n$. 
Therefore, the Fourier transform of $P$ at $\rho_{\textbf{a}}$ is 
$$\widehat{P}(\rho_{\textbf{a}}):= \sum_{\textbf{v}} \rho_{\textbf{a}}(\textbf{v}) P(\textbf{v})= \frac{1}{2} +\frac{1}{2} \sum^k_{b=0}(-1)^b \frac{{j \choose b} {n-j \choose k-b}}{{n \choose k}},$$
where $j$ denotes the number of coordinates of $\textbf{a}$ that are equal to one. According to Theorem $6$ of \cite{Pbook}, the eigenvalues of the transition matrix are exactly the $\widehat{P}(\rho_{\textbf{a}})$, $  \textbf{a} \in (\mathbb{Z}/2\mathbb{Z})^n$. The corresponding (non-normalized) eigenfunction is $f_{\textbf{a}}(\textbf{x})= (-1)^{\textbf{x} \cdot \textbf{a}}$. Notice that all $\textbf{a} \in (\mathbb{Z}/2 \mathbb{Z})^n $ that have the same number of zeros give 
the same eigenvalue. Thus, if $|\textbf{x}| $ denotes the 
number of ones of $\textbf{x}$, the $j^{th}$ Krawtchuck 
polynomials $$f_j(\textbf{x})=\sum^{|\textbf{x}|}_{b=0}
(-1)^b \frac{ {|\textbf{x}| \choose b} {n-|\textbf{x}| 
\choose j-b} }{{n \choose j}}$$
 are eigenfunctions and their normalized form will be  used to compute the lower bound for the mixing time.
\begin{proof}[Proof of \ref{coupling_upper_bound}(b)]
Remember that the definition of the total variation distance is
$$\lVert P-Q\rVert_{T.V.}  = \sup_{A} |P(A)-Q(A)|.$$

A specific set $A$ will provide a lower bound.  To find this lower bound, consider the normalized Krawtchuck polynomial of degree one $f(x)= \sqrt{n} (1-\frac{2x}{n})$ and the non-normalized Krawtchuck polynomial of degree two $f_2(x)= 1 -\frac{4x}{n-1} + \frac{4x^2}{n^2-n}$.
Then, consider $A_{\alpha} = \{ x: |f(x)| \leq \alpha \}.$ A specific choice for $\alpha$
will guarantee the correct lower bound. 

The orthogonality relations that the normalized Krawtchuck polynomials satisfy give that
if $Z$ is a point chosen uniformly in $X=\{0,1,2,...,n\}$ then
$$\mathbb{E}\{ f(Z) \}=0 \mbox{ and } \operatorname{Var} \{  f(Z) \}=1.$$
If $Z_{\ell}= \lVert X_{\ell} \rVert$, then we have that 
$$ \mathbb{E} \{ f(Z_{\ell})\} = \sqrt{n} \left( \frac{1}{2}+ \frac{1}{2}\left(1-\frac{2k}{n}\right) \right)^{\ell}= \sqrt{n} \left( 1- \frac{k}{n} \right)^{\ell},$$
because $f$ is an eigenfunction of the Markov chain corresponding to the eigenvalue $1- \frac{k}{n} .$
Again under the convolution measure, we have that
 $$\operatorname{Var} \{ f(Z_{\ell}) \} =$$
 $$\frac{n}{n} + \frac{n(n-1)}{n} \left( 1+ \frac{2k^2-2kn}{n^2-n}\right)^{\ell} - n \left( 1- \frac{k}{n}  \right)^{2 \ell}=$$
 $$1 +(n-1) \left( 1- \frac{2kn-2k^2}{n^2-n}\right)^{\ell} - n \left( 1- \frac{k}{n}  \right)^{2 \ell}.$$
Recall that the first three (non-normalized)  eigenfunctions of this Markov chain are
$$f_0(x)=1, f_1(x)=1-\frac{2x}{n} \mbox{ and }f_2(x)= 1 -\frac{4x}{n-1} + \frac{4x^2}{n^2-n} .$$

By direct computation, we have that $f^2_1(x)= \frac{1}{n} f_0(x) + \frac{n-1}{n} f_2(x)$. Combining this and the fact that $f_2$ corresponds to the eigenvalue $1+ \frac{2k^2-2kn}{n^2-n}$ gives the claimed variance. Now, take $\ell$ of the form $\frac{n}{2k}\log n -c\frac{n}{k}$ (where $c>0$).
\begin{enumerate}
\item  First case to be considered is $k =  \frac{n}{d}  $ where $d$ is a constant. 
$$ \mathbb{E}( f(Z_{\ell})) =  \sqrt{n} \left( 1- \frac{k}{n} \right)^{\ell} =\sqrt{n} \left( 1- \frac{1}{d} \right)^{\ell} $$

and 

$$\operatorname{Var} \{ f(Z_{\ell})\} = 1 +(n-1) \left( 1- \frac{2kn-2k^2}{n^2-n}\right)^{\ell} - n \left( 1- \frac{k}{n}  \right)^{2 \ell } $$
$$ =   1 +(n-1) \left( 1-  \frac{2n(1- \frac{1}{d})}{d(n-1)}   \right)^{\ell} - n \left( 1 - \frac{1}{d}  \right)^{2 \ell} $$ $$\leq 1 +(n-1) \left( 1-  \frac{2(1- \frac{1}{d})}{d}   \right)^{\ell} - n \left( 1 - \frac{1}{d}  \right)^{2 \ell}  $$ $$=1 +(n-1) \left( \frac{d-1}{d}  \right)^{2 \ell} - n \left(  \frac{d-1}{d}  \right)^{2 \ell} .$$

In this case if $\ell =1/2 \log_{d/(d-1)} (n) - c$, then 
$$ \mathbb{E}\{f(Z_{\ell})\}\geq  \left( \frac{d}{d-1} \right)^c$$
so if $0<c< \frac{1}{4} \log_{d/(d-1)} (n)$ the expectation $ E \{ f(Z_{\ell})\}$ can get big while
$$\operatorname{Var} \{ f(Z_{\ell})\} \leq  2 .$$

\item   If there is $0<\varepsilon<1$, so that $k=O(n^{\varepsilon})$, then the mean  becomes

$$\mathbb{E}\{ f(Z_{\ell})\}= \exp \left(c +  O\left(\frac{k\log n}{n} \right)+O\left(\frac{ ck}{n}\right)\right), $$
which means that for $0<c <  \frac{1}{4} \log (n/k)$ this expectation is big.
Similarly for the variance one gets
\begin{equation*}
\begin{split}
& \operatorname{Var} \{ f(Z_{\ell}) \}   \\ 
& =1 +(n-1) \exp \left( c \frac{2n-2k}{n-1} - \frac{n-k}{n-1} \log n + O\left(\frac{k}
{n}  \log n \right) + O\left( \frac{ck}{n} \right)  \right) \\
& - \exp \left( 2c + O\left( \frac{k\log n}{n} 
\right) +O\left( \frac{ck}{n} \right)  \right)  \\
& \leq  1+O\left( \frac{1}{n}\right) + e^{2c} \left( O\left( \frac{k\log n} {n} \right) +O\left(\frac{ck}{n} \right) \right).
\end{split}
\end{equation*}
Therefore the variance is uniformily bounded for $0 \leq c < \frac{1}{4} \log (n/k$).

 \end{enumerate}
In both cases, Chebyshev's inequality gives that for the set $A_{\alpha} = \{ x: |f(x)| \leq \alpha \}$, we have that
$$U(A_{\alpha} ) \geq 1- \frac{1}{\alpha^2} \mbox{ and } P^{*\ell}(A_{\alpha}) < \frac{B}{(e^{2c} - \alpha)^2}, $$
where $B$ is uniformly bounded when $0 \leq c \leq \frac{1}{4} \log n$.
Therefore, we have
$$||P^{*\ell}_{id} - U||_{T.V.} \geq 1- \frac{1}{\alpha^2} - \frac{B}{(e^{2c}- \alpha)^2}.$$
Now, take $\alpha= \frac{e^{2c}}{2}$, which finishes the proof.

\end{proof}

\section{Fourier Transform Arguments}\label{eig}

In this section, a different approach is introduced. It combines the representation theory of the hypercube and the Fourier transform to provide a bound for the mixing time. All of the irreducible representations of the hypercube are one dimensional and they are indexed by
$\textbf{z} \in (\mathbb{Z}/ 2 \mathbb{Z})^n$ in the following way:
$$\rho_{\textbf{z}}(\textbf{w})= (-1)^{\textbf{z} \cdot  \textbf{w}},$$
 where $\textbf{z} \cdot  \textbf{w}$ is the inner product of $\textbf{z}, \textbf{w} \in (\mathbb{Z}/2\mathbb{Z})^n$.
The Fourier transform of a probability $P$ at a representation $\rho$ is defined as
$$\widehat{P}(\rho)= \sum_{g \in (\mathbb{Z}/2 \mathbb{Z})^n} P(g) \rho(g),$$
which in our case means
\begin{equation}\label{eigen}
\widehat{P}(\rho_{\textbf {z}})= p +(1-p) \sum^j_{a=0} (-1)^a \frac{{j \choose a} {n-j \choose k-a}}{{n \choose k}}= p +(1-p) K^n_j(k),
\end{equation}
 where $j$ is the number of ones that $\textbf{z}$ has and $K^n_j(k)$ is the $j$th Krawtchuck polynomial evaluated at $k$.

In Chapter 3 of \cite{Pbook}, one can find the Upper Bound Lemma (Lemma 1 in the book) which shows how using the Fourier transform of the representations of a group to find an upper bound for the mixing time of a walk on the group. More precisely, the upper bound lemma in the case of the hypercube (or in general for $(\mathbb{Z}/p \mathbb{Z})^n$ says:
 
 \begin{lemma}(Upper Bound Lemma)\label{ubl}
For a random walk on the hypercube, after $\ell$ steps, we have that
 $$4\lVert P_x^{*\ell} -U\rVert ^2_{T.V.} \leq \sum_{ \textbf{z} \neq 0} (\widehat{P}(\rho_{\textbf{z}}) )^{2\ell},$$
 for all $x \in (\mathbb{Z}/ 2 \mathbb{Z})^n$.
 \end{lemma}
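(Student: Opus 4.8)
The plan is to run Diaconis's classical duality argument: convert the total variation distance (an $\ell^1$ quantity) into an $\ell^2$ quantity via Cauchy--Schwarz, then apply the Plancherel identity for the finite abelian group $(\mathbb{Z}/2\mathbb{Z})^n$ to express the resulting $\ell^2$ norm through the Fourier coefficients $\widehat{P}(\rho_{\textbf{z}})$, and finally exploit the fact that the Fourier transform turns convolution into multiplication.

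First I would write $\|P^{*l}-U\|_{T.V.}=\frac12\sum_{g}|P^{*l}(g)-U(g)|$, so that $2\|P^{*l}-U\|_{T.V.}=\|P^{*l}-U\|_1$, and apply Cauchy--Schwarz to the $2^n$-term sum $\sum_g |P^{*l}(g)-U(g)|\cdot 1$, which gives $\|P^{*l}-U\|_1\le 2^{n/2}\,\|P^{*l}-U\|_2$, hence
$$4\|P^{*l}-U\|_{T.V.}^2\;\le\; 2^n\sum_{g}\bigl(P^{*l}(g)-U(g)\bigr)^2 .$$
Next I would invoke Plancherel: since the characters $\rho_{\textbf{z}}(\textbf{w})=(-1)^{\textbf{z}\cdot\textbf{w}}$ form an orthogonal basis of $L^2((\mathbb{Z}/2\mathbb{Z})^n)$ with $\sum_g \rho_{\textbf{z}}(g)\overline{\rho_{\textbf{w}}(g)}=2^n\delta_{\textbf{z},\textbf{w}}$, any function $f$ on the group satisfies $\sum_g |f(g)|^2 = 2^{-n}\sum_{\textbf{z}}|\widehat{f}(\rho_{\textbf{z}})|^2$. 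Combining this with the homomorphism property $\widehat{P^{*l}}(\rho_{\textbf{z}})=\widehat{P}(\rho_{\textbf{z}})^l$, together with $\widehat{U}(\rho_{\textbf{z}})=\delta_{\textbf{z},0}$ and $\widehat{P}(\rho_0)=1$ (so the $\textbf{z}=0$ coordinate of $\widehat{P^{*l}}-\widehat{U}$ vanishes), I obtain
$$2^n\sum_{g}\bigl(P^{*l}(g)-U(g)\bigr)^2 \;=\; \sum_{\textbf{z}\neq 0}\bigl|\widehat{P}(\rho_{\textbf{z}})^l\bigr|^2 \;=\; \sum_{\textbf{z}\neq 0}\widehat{P}(\rho_{\textbf{z}})^{2l},$$
where the last equality uses that each eigenvalue $\widehat{P}(\rho_{\textbf{z}})=p+(1-p)K^n_j(k)$ is real by equation (\ref{eigen}). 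Chaining the two displays gives the claimed bound.

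There is no genuine analytic obstacle here; the only thing requiring care is bookkeeping with the normalizations, namely that the factor $2^n$ produced by Cauchy--Schwarz exactly cancels the $2^{-n}$ in the Plancherel formula, and that the $\textbf{z}=0$ term really does drop out --- which it does precisely because both $P^{*l}$ and $U$ are probability measures. The "hard part'' is thus simply assembling the standard ingredients (the $\ell^1$--$\ell^2$ inequality, Parseval, and $\widehat{P*Q}=\widehat{P}\,\widehat{Q}$) in the right order; all of these are available from the representation theory of $(\mathbb{Z}/2\mathbb{Z})^n$ recalled at the start of this section.
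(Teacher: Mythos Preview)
Your argument is correct and is exactly the standard Diaconis proof (Cauchy--Schwarz from $\ell^1$ to $\ell^2$, Plancherel on the abelian group, convolution-to-multiplication, and cancellation of the trivial character). The paper itself does not supply a proof of this lemma but simply cites it as Lemma~1 in Chapter~3 of \cite{Pbook}, so you have in fact reproduced the proof from the reference rather than diverged from it.
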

Lemma \ref{ubl} will be used for the case $k=\frac{n}{2}$ and for a the walk generated by $Q$ as defined in \eqref{q}.

\section{The case k=n/2}

In the case where $n$ is even with $n=2k$  where $k$ is a positive, odd integer, the following facts hold:

\begin{lemma}
For $k= \frac{n}{2}$ the Fourier Transform of representation $\rho_{\textbf{z}}$ is given by
$$\widehat{P}(\rho_{\textbf{z}})= \begin{cases} \frac{1}{2}, & \mbox{if j is odd;} \\ \frac{1}{2} +\frac{(-1)^i {\frac{n}{2} \choose i}}{2{n \choose 2i}}, & \mbox{if j=2i,}  \end{cases}$$
where $j$ is the number of ones that $\textbf{z}$ has.  
\end{lemma}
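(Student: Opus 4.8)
The plan is to compute $\widehat{P}(\rho_{\textbf z})$ directly from formula~(\ref{eigen}), specialised to $p=\tfrac12$ and $k=\tfrac n2$, and to reduce the whole statement to a single evaluation of a Krawtchouck polynomial at the symmetric point $x=\tfrac n2$ by means of the Krawtchouck reciprocity relation. Setting $p=\tfrac12$, $k=\tfrac n2$ in~(\ref{eigen}) gives $\widehat{P}(\rho_{\textbf z})=\tfrac12+\tfrac12 K^n_j(\tfrac n2)$, so it suffices to show that $K^n_j(\tfrac n2)=0$ when $j$ is odd and $K^n_j(\tfrac n2)=(-1)^i\binom{n/2}{i}\big/\binom{n}{2i}$ when $j=2i$.

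Write $N(m;x):=\sum_{a}(-1)^a\binom{x}{a}\binom{n-x}{m-a}$ for the unnormalised Krawtchouck numbers, so that $K^n_j(k)=N(k;j)/\binom nk$. The standard one-variable generating function $\sum_m N(m;x)z^m=(1-z)^x(1+z)^{n-x}$, multiplied by $\binom nx w^x$ and summed over $x$, yields
\[
\sum_{x,m}\binom nx N(m;x)\,z^m w^x=\bigl((1+z)+w(1-z)\bigr)^n ,
\]
which is manifestly symmetric in $z$ and $w$. Comparing the coefficients of $z^m w^x$ and of $z^x w^m$ gives the reciprocity $\binom nx N(m;x)=\binom nm N(x;m)$. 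Taking $m=\tfrac n2$ and $x=j$ turns this into $N(\tfrac n2;j)=\frac{\binom{n}{n/2}}{\binom nj}N(j;\tfrac n2)$, hence $K^n_j(\tfrac n2)=N(j;\tfrac n2)/\binom nj$. Finally, since $n-\tfrac n2=\tfrac n2$,
\[
N\!\left(j;\tfrac n2\right)=\sum_a(-1)^a\binom{n/2}{a}\binom{n/2}{j-a}=[z^j](1-z)^{n/2}(1+z)^{n/2}=[z^j](1-z^2)^{n/2},
\]
which is $0$ when $j$ is odd and $(-1)^i\binom{n/2}{i}$ when $j=2i$. Substituting back into $\widehat P(\rho_{\textbf z})=\tfrac12+\tfrac12 K^n_j(\tfrac n2)$ produces exactly the two cases claimed.

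The argument is essentially a short computation and there is no real obstacle; the only point requiring thought is recognising that one should use reciprocity to \emph{transpose} the degree $j$ and the argument $\tfrac n2$ of the Krawtchouck polynomial, after which the factorisation $(1-z)^{n/2}(1+z)^{n/2}=(1-z^2)^{n/2}$ makes the evaluation immediate. The one bookkeeping subtlety is the normalisation $\binom nk$ built into the paper's $K^n_j$: it is precisely what makes the denominator in the even case come out as $\binom{n}{2i}$ rather than $\binom{n}{n/2}$. (For the odd case one can also skip reciprocity entirely and argue directly: in $N(\tfrac n2;j)$ pair the summation index $a$ with $j-a$ and use $\binom{n-j}{n/2-a}=\binom{n-j}{n/2-j+a}$ to see that the two contributions cancel, so the sum vanishes whenever $j$ is odd.)
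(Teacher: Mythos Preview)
Your proof is correct. It differs from the paper's, which quotes the three-term recurrence for Krawtchouck polynomials from \cite{Koekoek},
\[
-kK^n_j(k)=\tfrac12(n-j)K^n_{j+1}(k)-\tfrac{n}{2}K^n_j(k)+\tfrac12 jK^n_{j-1}(k),
\]
specialises it to $k=\tfrac n2$, and solves the resulting two-step recursion from $K^n_0=1$, $K^n_1(\tfrac n2)=0$. Your route instead proves the reciprocity $\binom nx N(m;x)=\binom nm N(x;m)$ via the symmetric bivariate generating function $((1+z)+w(1-z))^n$, transposes degree and argument, and then reads off $N(j;\tfrac n2)=[z^j](1-z^2)^{n/2}$. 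This is a genuinely different and arguably cleaner argument: it is self-contained (no external reference needed for the recurrence), it explains structurally why only even $j$ survive, and it delivers the exact value $(-1)^i\binom{n/2}{i}$ in one stroke rather than by induction. The paper's approach, on the other hand, stays closer to the classical orthogonal-polynomial toolkit and would generalise more readily if one wanted $K^n_j$ at other special points where the recurrence simplifies.
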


\begin{proof}

Acccording to Koekoek and Swarttouw  in \cite{Koekoek} the $j$th Krawtchuck polynomial satisfies the following recurrence relation:
\begin{equation}\label{koSwa} 
-k K^n_j(k)= \frac{1}{2} (n-i)K^n_{j+1}(k) -\frac{n}{2} K^n_j(k) +\frac{n}{2}K^n_{j-1}(k) .
\end{equation}
If $k=\frac{n}{2}$, then equation \eqref{koSwa} gives that
$$K^n_j \left(\frac{n}{2} \right)= \begin{cases} 0, & \mbox{if } j\mbox{ is odd;} \\ \frac{(-1)^i {\frac{n}{2} \choose i}}{{n \choose 2i}}, & \mbox{if }j=2i  , \end{cases}$$
since that $K^n_0(k)=1$ and $K^n_1(k)=1$.
\end{proof}

The next step is to bound the eigenvalues and  use the Upper Bound Lemma  to actually get an upper bound for the $L^2$ norm.
\begin{lemma}\label{bound_eig}
For every representation $\rho_{\textbf{z}}$ where $\textbf{z} \neq \textbf{0}$,
\begin{equation}
|\widehat{P}(\rho_{\textbf{z}})| \leq \frac{3}{4}.
\end{equation}
\end{lemma}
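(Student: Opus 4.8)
The plan is to read off the value of $\widehat{P}(\rho_{\textbf{z}})$ from the formula just established and bound it according to the parity of $j$, the number of ones of $\textbf{z}$. If $j$ is odd, then $\widehat{P}(\rho_{\textbf{z}})=\frac12$ and there is nothing to prove. If $j=2i$ is even, then since $\textbf{z}\neq\textbf{0}$ we have $i\geq 1$, and
$$\widehat{P}(\rho_{\textbf{z}})=\frac12+\frac{(-1)^{i}\binom{n/2}{i}}{2\binom{n}{2i}},$$
so it suffices to show that the nonnegative number $r_i:=\dfrac{\binom{n/2}{i}}{2\binom{n}{2i}}$ satisfies $r_i\leq\frac14$, i.e. $2\binom{n/2}{i}\leq\binom{n}{2i}$; then $|\widehat{P}(\rho_{\textbf{z}})|\leq\frac12+\frac14=\frac34$.

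The key estimate is the Vandermonde identity. Writing $m=n/2$,
$$\binom{n}{2i}=\binom{2m}{2i}=\sum_{s=0}^{2i}\binom{m}{s}\binom{m}{2i-s}\geq\binom{m}{i}^{2},$$
since all summands are nonnegative and the term $s=i$ equals $\binom{m}{i}^{2}$. Hence, as soon as $\binom{m}{i}\geq 2$, we get $\binom{n}{2i}\geq\binom{m}{i}^{2}\geq 2\binom{m}{i}$, which is exactly $r_i\leq\frac14$. Since $\binom{m}{i}\geq m\geq 2$ for every $1\leq i\leq m-1$ (assuming $m\geq 2$), this handles every even $j$ with $2\leq j\leq n-2$.

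The only value left out is $i=m$, i.e. $\textbf{z}$ the all-ones vector, where $\binom{m}{m}=\binom{n}{n}=1$ and the estimate above genuinely fails ($r_m=\frac12$). This is exactly where the hypothesis that $k$ is odd is used: since $m=n/2=k$ is odd, $(-1)^{m}=-1$, so $\widehat{P}(\rho_{\textbf{z}})=\frac12-\frac12=0\leq\frac34$. (When $n=2$ the only even $j\geq 2$ is $j=n=2$, so this case is the whole story there.) I expect this top-vertex case to be the one subtle point of the argument: the naive bound on $r_i$ breaks precisely at $\textbf{z}=(1,\dots,1)$, and it is the oddness of $k$ that restores the inequality there — consistent with the fact that $k$ must be odd for the walk to be transitive in the first place.
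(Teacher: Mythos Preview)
Your proof is correct. The overall shape---case split on the parity of $j$, then control of the ratio $\binom{n/2}{i}/\binom{n}{2i}$ for even $j=2i$---matches the paper, but the way you handle the even case is different.

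The paper further splits on the parity of $i$: for $i$ odd the second term is negative, so $\widehat{P}\in[0,\tfrac12]$; for $i$ even (hence $i\leq \tfrac{n}{2}-1$ since $\tfrac{n}{2}=k$ is odd) it asserts that $\binom{n/2}{i}/\binom{n}{2i}$ is maximized at $i=\tfrac{n}{2}-1$ with value $\tfrac{1}{n-1}$, which is more than enough. You instead prove the weaker uniform bound $\binom{n/2}{i}/\binom{n}{2i}\leq\tfrac12$ for all $1\leq i\leq m-1$ via Vandermonde, $\binom{2m}{2i}\geq\binom{m}{i}^2\geq 2\binom{m}{i}$, and then invoke the sign only at the boundary $i=m$ where Vandermonde degenerates. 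This trades the paper's sharper (but only sketched) ratio estimate for a completely elementary identity, at the cost of having to treat $\textbf{z}=(1,\dots,1)$ separately; the paper never needs that case because its $i$-odd branch already absorbs $i=m$. Your explicit use of ``$k$ odd'' at the top vertex is exactly right and makes transparent where that hypothesis enters.
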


\begin{proof}
Let $j$ be the number of ones $\textbf{z}$ has. If $j$ odd the theorem holds because $\widehat{P}(\rho_{\textbf{z}})= \frac{1}{2}$. If $j=2i$ the quantity $\frac{1}{2} +\frac{(-1)^i {\frac{n}{2} \choose i}}{2{n \choose 2i}} $ is the main concern.
For $i$ odd, the second term is negative but bigger than $- 1/2$ therefore the quantity is positive  less than $\frac{1}{2}$. For $i$ even, it turns out that $c_i= \frac{{\frac{n}{2} \choose i}}{{n \choose i}}$ is maximized for $i= \frac{n}{2}-1$ and at most $ \frac{1}{n-1}$ by a simple argument. Thus, for $i$ even, $\frac{1}{2} +\frac{(-1)^i {\frac{n}{2} \choose i}}{2{n \choose 2i}} \leq \frac{3}{4}$.
\end{proof}

\begin{theorem}
For $k=\frac{n}{2}$ and for $\ell= \frac{n \log2 - \log \varepsilon}{\log \frac{4}{3}} $ with $0 < \varepsilon <1$,
$$4\lVert  P_x^{* \ell} -U\rVert ^2_{T.V.} < \varepsilon. $$
for all $x \in (\mathbb{Z}/ 2 \mathbb{Z})^n $.
\end{theorem}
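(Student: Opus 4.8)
The plan is to combine the Upper Bound Lemma (Lemma~\ref{ubl}) with the uniform eigenvalue estimate of Lemma~\ref{bound_eig}. Lemma~\ref{ubl} gives
$$4\|P^{*l} - U\|^2_{T.V.} \le \sum_{\mathbf z \neq \mathbf 0} \bigl(\widehat P(\rho_{\mathbf z})\bigr)^{2l},$$
and the sum runs over the $2^n - 1$ nonzero vectors $\mathbf z \in (\mathbb Z/2\mathbb Z)^n$. Since Lemma~\ref{bound_eig} shows $|\widehat P(\rho_{\mathbf z})| \le 3/4$ for \emph{every} such $\mathbf z$, I would bound each summand by $(3/4)^{2l} \le (3/4)^{l}$ and replace the sum by the number of terms times this maximum, obtaining
$$4\|P^{*l} - U\|^2_{T.V.} \le (2^n - 1)\left(\tfrac34\right)^{l} < 2^n \left(\tfrac34\right)^{l}.$$

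It then remains to choose $l$ so that $2^n(3/4)^l \le \epsilon$. Taking logarithms, this is equivalent to $l\log(4/3) \ge n\log 2 - \log\epsilon$, i.e.\ to $l \ge \frac{n\log 2 - \log\epsilon}{\log(4/3)}$, and for $0<\epsilon<1$ the numerator is positive. With the value of $l$ in the statement one has equality in the exponent, so $2^n(3/4)^l = \epsilon$ exactly and $(2^n-1)(3/4)^l = \epsilon(1-2^{-n}) < \epsilon$; this yields the claimed strict inequality $4\|P^{*l}-U\|^2_{T.V.} < \epsilon$.

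There is essentially no obstacle in this last step — it is an elementary geometric-series estimate once Lemma~\ref{bound_eig} is available. The only points worth flagging are that the bound $|\widehat P(\rho_{\mathbf z})|\le 3/4$ must be uniform over all $2^n-1$ nontrivial representations (this is exactly what makes the crude ``sum $\le$ count times maximum'' estimate legitimate), and that weakening $(3/4)^{2l}$ to $(3/4)^{l}$ — wasteful by a factor in $l$ — is what produces the tidy closed form stated for $l$; keeping the exponent $2l$ would give the sharper threshold $l \ge \frac{n\log 2 - \log\epsilon}{2\log(4/3)}$ at the cost of a less clean expression.
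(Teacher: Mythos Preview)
Your proof is correct and follows the paper's approach essentially verbatim: Upper Bound Lemma, then the uniform bound $|\widehat P(\rho_{\mathbf z})|\le 3/4$ from Lemma~\ref{bound_eig}, then the crude estimate (number of terms)$\times$(maximum). The paper in fact keeps the exponent $2l$ in its display $2^n(3/4)^{2l}\le\epsilon$ while stating the same value of $l$, so its inequality is looser than needed by exactly the factor of $2$ you flag at the end; your deliberate weakening $(3/4)^{2l}\le(3/4)^{l}$ is what makes the stated $l$ come out on the nose, and your use of $2^n-1$ rather than $2^n$ to secure the strict inequality is a nicety the paper omits.
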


\begin{proof}
After having computed the Fourier transform of each representation and bounded them in Lemma \ref{bound_eig},  the upper bound lemma gives that
$$4\lVert  P_x^{* \ell} -U\rVert ^2_{T.V.} \leq \sum^n_{j=1} {n \choose j} b_j^{2\ell} \leq 2^n \left(\frac{3}{4}\right)^{2\ell} \leq \varepsilon.$$
\end{proof}

\begin{remark}
Notice that for the $\ell^2$ norm, we have that
$$|G| \lVert  P_x^{* \ell} -U\rVert ^2_2 \geq \left( \frac{1}{2} \right)^{2\ell} \sum_{\mbox{i is odd}} {n \choose i}= 2^{\frac{n-1}{2}-2\ell} ,$$
so indeed the $L^2$ norm cannot give a better upper bound for the mixing time. However, the $L^1$ norm may still be small for smaller $\ell$.
\end{remark}

\section{Similar Random Walks}\label{more}
This section focuses on similar random walks on $(\mathbb{Z}/ m \mathbb{Z})^n$. It uses Fourier Transforms and comparison theory methods to bound their  mixing times. 

\subsection{A random walk on $(\mathbb{Z}/ m \mathbb{Z})^n$ }
Consider the walk generated by the measure

\begin{equation*}
Q(a_{i_1} e_{i_1} +a_{i_2} e_{i_2}+ \ldots + a_{i_k} e_{i_k}  )= \frac{1}{{n \choose k}m^k},
\end{equation*}
where $a_{i_j} \in \mathbb{Z}/ m \mathbb{Z}$ and $\{ i_1, i_2, \ldots i_k\} \subset \{1,2,\ldots, n\}$.
\begin{proof}[Proof of Theorem \ref{gen}]
Let $\rho_{a}$ denote a representation of $(\mathbb{Z}/ m \mathbb{Z})^n$, where $a\in (\mathbb{Z}/ m \mathbb{Z})^n$. If $g = (g_1,\ldots, g_n) \in (\mathbb{Z}/ m \mathbb{Z})^n$ then
$$\rho_{a}(g) =e^{\frac{2 \pi i \sum^n_{j=1} a_jg_j }{m}} . $$
Then the Fourier transform of $Q$ at this representation  with respect to $Q$ is
$$\widehat{Q}(\rho_{a})= \frac{1}{{n \choose k}m^k} \sum_{|g| \leq k} e^{\frac{2 \pi i \sum^n_{j=1} a_jg_j }{m}} , $$ 
where $|g|$ denotes the number of positions that $g_i$ is not equal to zero.
Now $$\sum_{b \in \mathbb{Z}/ m \mathbb{Z}}  e^{\frac{2 \pi i a_j b}{m}}= m \delta_{0, a_j},$$ 
with $\delta_{0, a_j}= 1$, if $a_j=0$ and $\delta_{0, a_j}= 0$ otherwise. If  $n- |a| \geq k $, then
$$\widehat{Q}(\rho_{a})= \frac{{n- |a| \choose k}}{{n \choose k}},$$
otherwise $\widehat{Q}(\rho_{a})=0$. These are the eigenvalues of the random walk that are not equal to $1$.

Now notice that all of the eigenvalues are non-negative and in particular
\begin{align*}
 \frac{{n- |a| \choose k}}{{n \choose k}}&= \left(1-\frac{k}{n} \right)\left(1-\frac{k}{n-1} \right)\ldots \left(1-\frac{k}{n-|a|+1}\right) \\
 & \leq e^{-k \sum^n_{j=n-|a|+1} \frac{1}{j}} \\
 &   \leq e^{- k \log \frac{n+1}{n -|a|+1}} = \left( 1- \frac{|a|}{n+1}\right)^k.
 \end{align*}

Then, the Upper Bound Lemma (Lemma \ref{ubl}) gives that 
$$4||Q_x^{*\ell}-U||^2_{T.V.} \leq \sum^{n-k}_{j=1} {n \choose j } (m-1)^j \left( 1- \frac{j}{n+1}\right)^{2kl}$$ $$  \leq \sum^{n-k}_{j=1} \frac{n^j}{j!} (m-1)^j \left( 1- \frac{j}{n+1}\right)^{2kl} \leq e^{-c},$$
where $\ell=\frac{n+1}{2k} \log (mn) + \frac{c(n+1)}{2k}$.
\end{proof}

\begin{remark}
Notice that the first time that all coordinates have been touched, is a strong stationary time, which implies that the total variation distance needs order $\frac{n}{k} \log n $ steps to get small. To see this one can imitate the calculation for the coupon collector problem as presented in Lemma $2$ of \cite{A-D}. Further, notice that
$$|G|\lVert Q_x^{*\ell}-U\rVert ^2_2 \geq n (m-1) \left( 1- \frac{1}{n}\right)^{2kl},$$
which means that the $L^2$ norm needs at least $\frac{n}{2k} \log (mn) + \frac{c(n+1)}{2k}$ steps to get small.
Therefore, there is a gap between the separation distance and the $L^2$ norm mixing times.
\end{remark}

\subsection{Comparison Theory Application}\label{comp}
Comparison theory can help provide an upper bound for the following example:
\begin{example}
With notation as in Theorem \ref{gen}, consider the case $k=1$. Then, 
\begin{equation*}
Q(be_{i}) = \frac{1}{mn},
\end{equation*}
for $b\in \mathbb{Z}/ m \mathbb{Z} $ and $1 \leq i \leq n$. The walk suggests to pick a coordinate at random and randomize it.
Theorem \ref{gen} states that if $\ell= \frac{1}{2}((n+1) \log (m n) +c(n+1))$, then for every $x \in\mathbb{Z}/ m \mathbb{Z}$, we have that
$$4||Q_x^{*\ell}-U||^2_{T.V.} \leq e^{-c} .$$
Comparison theory gives the following theorem for the mixing time of the random walk generated by
\begin{equation*}
\text{\~{P}}(\pm e_1)= \frac{1}{4n}, \text{\~{P}}(id)= \frac{1}{2}.
\end{equation*}
and let $\text{\~{P}}(x,gx)= \text{\~{P}}(g) $, for every 
$x,g \in (\mathbb{Z}/ m \mathbb{Z})^n$.
\begin{theorem}
Let $\ell= \frac{1}{2}m^2 ((n+1) \log (mn) +c(n+1)$, then  for every $x \in (\mathbb{Z}/ m \mathbb{Z})^n$, we have that
$$4||\text{\~{P}}_x^{*\ell}-U||^2_{T.V.} \leq \left(1+\frac{1}{n^n} \right)e^{-c}.$$
\end{theorem}

\begin{proof}
Let $S= \{ \pm e_j, id\}$ and $S'= \{ be_j, b \in \mathbb{Z}/ m \mathbb{Z}\}$. According to P.\ Diaconis and L.\ Saloff-Coste \cite{Compare} if we represent each $\textbf{z} \in S'$ as a product of elements of $S$ that has odd length and
$$A= \max_{s \in S} \frac{1}{\text{\~{P}}(s)} \sum_{\textbf{z} \in S'} \lVert \textbf{z} \rVert  N(\textbf{z},s) Q(\textbf{z}),$$
where $\lVert \textbf{z}\rVert $ is the length of this representation and $N(\textbf{z},s)$ is the number of times $s$ is inside the representation of $\textbf{z}$, then
$$4\lVert \text{\~{P}}_x^{*\ell}-U \rVert^2_{T.V.} \leq m^n e^{-\ell/A} + m^n \lVert Q^{*\ell/2A}-U \rVert_2^2 . $$
An easy argument shows that $A \leq \max \{m^2 , \frac{2}{m} +2m \}= m^2 $. Therefore, if $\ell=\frac{m^2}{2} ((n+1) \log (m n) +c(n+1)) $,
$$4 \lVert \text{\~{P}}^{*\ell}_x-U \rVert^2_{T.V.} \leq \frac{e^{-c}}{n^n}+ e^{-c} . $$
\end{proof}
\end{example}

\acks
The author would like to thank Persi Diaconis and Graham White for the helpful comments they provided.

\bibliographystyle{apt}
\bibliography{final}

\begin{thebibliography}{10}

\bibitem{Aldous1}
{\sc Aldous, D.} (1983).
\newblock Minimization algorithms and random walk on the {$d$}-cube.
\newblock {\em Ann. Probab.\/} {\bf 11,} 403--413.

\bibitem{Aldous_correct}
{\sc Aldous, D.} (1983).
\newblock Random walks on finite groups and rapidly mixing {M}arkov chains.
\newblock In {\em Seminar on probability, {XVII}}.
\newblock vol.~986 of {\em Lecture Notes in Math.} Springer, Berlin
  pp.~243--297.

\bibitem{A-D}
{\sc Aldous, D. and Diaconis, P.} (1986).
\newblock Shuffling cards and stopping times.
\newblock {\em Amer. Math. Monthly\/} {\bf 93,} 333--348.

\bibitem{AndersenDiaconis}
{\sc Andersen, H.~C. and Diaconis, P.} (2007).
\newblock Hit and run as a unifying device.
\newblock {\em J. Soc. Fr. Stat. \& Rev. Stat. Appl.\/} {\bf 148,} 5--28.

\bibitem{pathcoupl}
{\sc Bubley, R., Dyer, M. and Jt, L.~L.} (1997).
\newblock Path coupling: A technique for proving rapid mixing in markov chains.
\newblock In {\em In FOCS ’97: Proceedings of the 38th Annual Symposium on
  Foundations of Computer Science (FOCS}.
\newblock p.~223.

\bibitem{Pbook}
{\sc Diaconis, P.} (1988).
\newblock {\em Group representations in probability and statistics}.
\newblock Institute of Mathematical Statistics Lecture Notes---Monograph
  Series, 11. Institute of Mathematical Statistics, Hayward, CA.

\bibitem{PDRGM}
{\sc Diaconis, P., Graham, R.~L. and Morrison, J.~A.} (1990).
\newblock Asymptotic analysis of a random walk on a hypercube with many
  dimensions.
\newblock {\em Random Structures Algorithms\/} {\bf 1,} 51--72.

\bibitem{Compare}
{\sc Diaconis, P. and Saloff-Coste, L.} (1993).
\newblock Comparison techniques for random walk on finite groups.
\newblock {\em Ann. Probab.\/} {\bf 21,} 2131--2156.

\bibitem{PDMS}
{\sc Diaconis, P. and Shahshahani, M.} (1987).
\newblock Time to reach stationarity in the {B}ernoulli-{L}aplace diffusion
  model.
\newblock {\em SIAM J. Math. Anal.\/} {\bf 18,} 208--218.

\bibitem{Ehr}
{\sc Ehrenfest, P. and Ehrenfest, T.} (1907).
\newblock Über zwei bekannte einwände gegen das boltzmannsche h-theorem.
\newblock {\em Physikalishce Zeitschrift\/} {\bf 8,} 311--314.

\bibitem{Kac}
{\sc Kac, M.} (1947).
\newblock Random walk and the theory of {B}rownian motion.
\newblock {\em Amer. Math. Monthly\/} {\bf 54,} 369--391.

\bibitem{Koekoek}
{\sc Koekoek, R. and Swarttouw, R.}
\newblock {\em The Askey-scheme of hypergeometric orthogonal polynomials and
  its q-analogue}.

\bibitem{Lbook}
{\sc Saloff-Coste, L.} (2004).
\newblock Random walks on finite groups.
\newblock In {\em Probability on discrete structures}.
\newblock vol.~110 of {\em Encyclopaedia Math. Sci.} Springer, Berlin
  pp.~263--346.

\bibitem{LSC}
{\sc Saloff-Coste, L.} (2004).
\newblock Total variation lower bounds for finite {M}arkov chains: {W}ilson's
  lemma.
\newblock In {\em Random walks and geometry}.
\newblock Walter de Gruyter GmbH \& Co. KG, Berlin pp.~515--532.

\bibitem{Sinclair}
{\sc Sinclair, A.} (2009).
\newblock Markov chain monte carlo: Foundations \& applications.
\newblock {\em https://people.eecs.berkeley.edu/~sinclair/cs294/f09.html\/}.

\bibitem{Wilson}
{\sc Wilson, D.~B.} (2003).
\newblock Mixing time of the {R}udvalis shuffle.
\newblock {\em Electron. Comm. Probab.\/} {\bf 8,} 77--85.

\end{thebibliography}
\end{document}